\newtheorem{theorem}{Theorem}[section]
\newtheorem{proposition}[theorem]{Proposition}
\theoremstyle{definition}
\newtheorem{definition}[theorem]{Definition}
\theoremstyle{remark}
\newtheorem{remark}[theorem]{Remark}
\numberwithin{equation}{section}
\begin{document}
\phantom{a}
\vspace{-1.5cm}
\title[ Cup product on $A_\infty$-cohomology and deformations ]{Cup product on $A_\infty$-cohomology and deformations}  

\author{ Alexey A. Sharapov }
\address{Physics Faculty, Tomsk State University, Lenin ave. 36, Tomsk 634050, Russia}
\email{sharapov@phys.tsu.ru}
\thanks {The work of the first author was supported by the Ministry of Science and Higher Education of the Russian Federation, Project No. 0721-2020-0033.}

\author{Evgeny D. Skvortsov}
\address{Albert Einstein Institute, 
Am M\"{u}hlenberg 1, D-14476, Potsdam-Golm, Germany}
\address{Lebedev Institute of Physics, 
Leninsky ave. 53, 119991 Moscow, Russia}
\email{evgeny.skvortsov@aei.mpg.de}
\thanks{The work of the second author was supported by the Russian Science Foundation grant 18-72-10123 in association with the Lebedev Physical Institute.  }

\subjclass[2010]{Primary 16S80; Secondary 17A30	}


\keywords{$A_\infty$-algebras, braces, algebraic deformation theory}

\begin{abstract}
We propose a simple method for constructing formal deformations of differential graded algebras  in the category of minimal $A_\infty$-algebras. 
The basis for our approach is provided by the Gerstenhaber algebra structure on $A_\infty$-cohomology, which we define in terms of the brace operations. As an example, we construct a minimal $A_\infty$-algebra from the Weyl-Moyal $\ast$-product algebra of polynomial functions. 
\end{abstract}

\maketitle
\section{Introduction}
The concept of homotopy associative algebras (or $A_\infty$-algebras), which first appeared  in the context of algebraic topology \cite{St}, has now evolved into a mature algebraic theory with numerous applications in theoretical and mathematical physics \cite{Kontsevich:2006jb, JSt}. String field theory \cite{KS,Erler:2013xta}, the deformation quantization of gauge systems \cite{LSh},  non-commutative field theory \cite{BBKL},  and higher-spin gravity \cite{ShSk1, LZ, ShSk3} are just a few examples where these algebras play a dominant role.  It turns out that many of $A_\infty$-algebras encountered  in applications are obtained by deforming differential graded algebras (DGA) or their families.  The general deformation problem for $A_\infty$-algebras has been considered in Refs. \cite{PSh, FP, Tor, ShSk2}. 

In this paper, we propose a simple formula for the  deformation of families of DGA's in the category of minimal $A_\infty$-algebras. The basis for our construction is provided by a cup product on $A_\infty$-cohomology. As was first shown by Getzler \cite{Getz}, each $A_\infty$-structure $m\in\mathrm{Hom}(T(V),V)$ on a graded vector space $V$ gives rise to an $A_\infty$-structure $M$ on the vector space $\mathrm{Hom}(T(V),V)$. 
As with any $A_\infty$-algebra, the second structure map $M_2$ induces a multiplication operation, called cup product, in the $A_\infty$-cohomology defined by the differential $M_1$  and we use this operation to deform the original family of $A_\infty$-structures $m$.

The main results of our paper can be summarized in the following 

\begin{theorem}\label{Th1}
Given a one-parameter family   $A=\bigoplus A_n$ of DGA's with differential $\partial: A_n\rightarrow A_{n-1}$, one can define a minimal $A_\infty$-algebra deforming the associative product in $A$ in  the direction of an (inhomogeneous) Hochschild cocycle $\Delta$ given by any linear combination of 
$$
\Delta_n (a_1, a_2, \ldots, a_n)=(a_1 \cdot a_2)'\cdot \partial a_3\cdots \partial a_n \,, \qquad \forall a_i\in A\,.
$$
Here $[\Delta_n]\in HH^n(A,A)$ and the prime stands for the derivative of the dot product in $A$ w.r.t. the parameter.  Each solution to the equation $a\cdot a=0$ for $a\in A_1$ can be deformed to a Maurer--Cartan element of the $A_\infty$-algebra above. 

\end{theorem}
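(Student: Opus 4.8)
The plan is to write the Maurer--Cartan series of the constructed algebra, observe that the higher brackets sum to a geometric series, and thereby collapse the $A_\infty$ equation to the bare associative condition; a given solution of $a\cdot a=0$ will then be a Maurer--Cartan element outright. The higher products of the minimal $A_\infty$-algebra are, up to the family parameter, $\mu_n(a_1,\dots,a_n)=(a_1\cdot a_2)\,\partial a_3\cdots\partial a_n$ for $n\ge2$ with $\mu_1=0$, which is consistent with $\partial_t\mu_n=\Delta_n$. Evaluating on $\tilde a\in A_1$ and using that $\partial\tilde a\in A_0$ multiplies the remaining factors on the right, I would compute
$$\sum_{n\ge2}\mu_n(\tilde a,\dots,\tilde a)=(\tilde a\cdot\tilde a)\sum_{k\ge0}(\partial\tilde a)^{k}=(\tilde a\cdot\tilde a)\,(1-\partial\tilde a)^{-1}.$$
Since $1-\partial\tilde a$ is invertible as a formal series, the Maurer--Cartan equation is equivalent to $\tilde a\cdot\tilde a=0$; hence every $a\in A_1$ with $a\cdot a=0$ is a Maurer--Cartan element of the $A_\infty$-algebra, which proves the claim.

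To exhibit the deformation explicitly I would let the product sweep through the family, $\cdot=\cdot_t$, set $s=t-t_0$, and solve $\tilde a\cdot_t\tilde a=0$ for $\tilde a=a+\sum_{k\ge1}s^{k}a_k$. The order-$s^{k}$ part is $D_a(a_k)=-O_k$, where $D_a(b)=a\cdot b-(-1)^{|b|}b\cdot a$ is the twisted differential (squaring to zero precisely because $a\cdot a=0$), and the leading obstruction is $O_1=(a\cdot a)'=\Delta_2(a,a)$. Differentiating the associativity and Leibniz relations of the family shows that $\Delta$ is a Hochschild cocycle, and the very same identity forces each $O_k$ to be $D_a$-closed, so that only their exactness remains in question.

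The exactness of these obstructions is where I expect the real work to lie. As the derivative of an associative product, $\Delta_2$ is a Hochschild $2$-cocycle whose antisymmetric part is a biderivation and hence vanishes on the diagonal, $\Delta_2^{\mathrm{anti}}(a,a)=0$, while its symmetric part is a coboundary $\Delta_2^{\mathrm{sym}}=\delta\phi$; for the regular algebras at hand, such as the polynomial Weyl--Moyal algebra, this splitting is the Hochschild--Kostant--Rosenberg theorem. One then reads off $O_1=\Delta_2^{\mathrm{sym}}(a,a)=a\cdot\phi(a)+\phi(a)\cdot a=D_a(\phi(a))$, so $a_1=-\phi(a)$, and the same pattern—antisymmetric contributions dropping out on repeated entries, symmetric ones being exact—disposes of the higher $O_k$ and assembles the Maurer--Cartan element deforming $a$.
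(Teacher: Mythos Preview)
Your central formula is incorrect: the maps $\mu_n(a_1,\dots,a_n)=(a_1\cdot a_2)\,\partial a_3\cdots\partial a_n$ do \emph{not} define an $A_\infty$-algebra. Already at the five-point Stasheff relation one needs $[\mu_2,\mu_4]+\mu_3\circ\mu_3=0$; a direct computation (using associativity of $\cdot$, the Leibniz rule for $\partial$, and $\partial^2=0$) shows that $\delta_{\mathrm{Hoch}}\mu_4=0$ while $\mu_3\circ\mu_3$ contains the nonvanishing term $(a_1a_2)\,\partial a_3\cdot a_4\,\partial a_5$, so the relation fails. The expression $\Delta_n=(a_1\cdot a_2)'\,\partial a_3\cdots\partial a_n$ in the theorem is only the \emph{first-order} coefficient of the deformed structure in a \emph{new} formal parameter $s$ (distinct from the family parameter $t$); the full $A_\infty$-structure $\bar m=m_2+s\,\Delta_n+O(s^2)$ is obtained in the paper as the solution of a flow equation $\partial_s\tilde m=\Delta_n[\tilde m]$, where $\Delta_n$ is realised as an iterated cup product $m_{(t)}\cup m_{(u)}\cup\cdots\cup m_{(u)}$ in the derived $A_\infty$-algebra on the Hochschild complex. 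The explicit example worked out in the paper (the Weyl--Moyal case) shows that the higher-order terms in $s$ are genuinely complicated sums over shuffles, nothing like your closed formula. Consequently the geometric-series collapse of the MC equation in your first paragraph is computing with the wrong algebra and does not apply.

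Your fallback argument for deforming $a$ is also off target. You are solving $\tilde a\cdot_t\tilde a=0$, which is the MC equation for the \emph{associative} family, not for the minimal $A_\infty$-algebra $\bar m$; and the mechanism you invoke to kill obstructions---splitting $\Delta_2$ via HKR into an antisymmetric biderivation plus a Hochschild coboundary---is available only for smooth/commutative algebras, not for a general one-parameter family of DGA's as in the theorem. The paper avoids obstruction theory entirely: once $\bar m$ is produced by the cup-product flow, the MC element is deformed by a companion flow $\partial_s a=-(-1)^l\Delta(D_{i_1},\tilde m_{(i_2)},\dots,\tilde m_{(i_l)})(a)$, derived from the brace identities, which guarantees $\bar m(a(s))=0$ for all $s$ without ever needing to split cocycles or solve $D_a$-exactness problems. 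That flow argument is the missing idea.
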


The theorem above admits various interesting specializations, of which we mention only one. Let  $\mathcal{M}$ be a one-parameter family of bimodules over $\mathcal{A}$ . Then one  can define the family of graded algebras  $A=A_0\bigoplus A_1$, where $A_0=\mathcal{A}$, $A_1=\mathcal{M}$, and the product is given by 
\begin{equation}\label{Ext}
\begin{array}{c}
(a_1,m_1)(a_2,m_2)=(a_1a_2, a_1m_2+m_1a_2)\\[3mm]
\forall a_1,a_2\in \mathcal{A}\,, \quad\forall m_1,m_2\in \mathcal{M}\,. 
\end{array}
\end{equation}
This is known as the trivial extension of the algebra $\mathcal{A}$ by the bimodule $\mathcal{M}$. In order to endow the algebra $A$ with a differential $\partial$, we consider the $\mathcal{A}$-dual bimodule  
$$
\mathcal{M}^\ast=\mathrm{Hom}_{\mathcal{A}-\mathcal{A}}(\mathcal{M},\mathcal{A})\,.
$$
Each element $h\in \mathcal{M}^\ast$ extends to an  $\mathcal{A}$-bimodule homomorphism $\tilde{h}: A\rightarrow \mathcal{A}$ by setting $\tilde{h}(a)=0$, $\forall a\in \mathcal{A}$.  
In case $\ker h=0$, one can easily see that $\tilde{h}$ is a derivation of  the algebra $A$ of degree $-1$. Furthermore, it follows from the definition that $\tilde{h}^2=0$. Hence, we can put $\partial =\tilde{h}$. The  deformation of the algebra $A=A_0\bigoplus A_1$ stated by  Theorem \ref{Th1}  yields then a deformation of the $\mathcal{A}$-bimodule $\mathcal{M}$ in the category of  minimal $A_\infty$-algebras. 

Notice that in the above construction $h(\mathcal{M})$ is a two-sided ideal in $\mathcal{A}$. Conversely, given a two-sided ideal $\mathcal{I}\subset \mathcal{A}$, we can set $\mathcal{M}=\mathcal{I}$ and take $h$ to be the inclusion map $\mathcal{M}  \hookrightarrow \mathcal{A}$. This allows one to canonically associate an $A_\infty$-algebra to any pair $(\mathcal{I},\mathcal{A})$. In the particular case $\mathcal{I}=\mathcal{A}$, we get a deformation of the family $\mathcal{A}$ itself. 
For this reason it is natural to term these and the other deformations following from  Theorem \ref{Th1} the {\it inner deformations of families}. 

The rest of the paper is organized as follows.  In Sec. 2,  we review some background material on $A_\infty$-algebras and braces. In Sec. 3,  we define the cohomology groups associated  to an $A_\infty$-structure and endow them with a commutative and associative cup product.  This product operation is then used in Sec. 4 for constructing inner deformations of multi-parameter families of $A_\infty$-algebras and, in particular, DGA's.  Here we also introduce the concept of  local finiteness for families and show that each inner deformation of a locally finite family of $A_\infty$-algebras induces a deformation of the corresponding Maurer--Cartan elements. By way of illustration, we finally construct a minimal $A_\infty$-algebra that deforms the algebra of polynomial functions regarded as a bimodule over itself. The deformation is completely determined by a canonical Poisson bracket and can be viewed as a certain generalization of the Weyl--Moyal $\ast$-product.

\section{$A_\infty$-algebras and braces}

Throughout the paper we  work over a fixed ground field  $k$ of characteristic zero. All tensor products and $\mathrm{Hom}$'s are defined over $k$ unless otherwise indicated.
 We begin by recalling  some basic definitions and
constructions related to  $A_\infty$-algebras.

Let $V=\bigoplus V^l$ be a $\mathbb{Z}$-graded vector space over $k$ and let
$T(V)=\bigoplus_{n\geq 0} V^{\otimes n}$ denote its  tensor algebra;
it is understood that $T^0(V)=k$.  The $k$-vector spaces $T(V)$ and
$\mathrm{Hom}(T(V),V)$ naturally inherit the grading of $V$. The vector  space
\begin{equation}\label{Hom}\mathrm{Hom}(T(V),V)=\bigoplus_l \mathrm{Hom}^l(T(V),V)\end{equation} 
is known to carry the structure of a graded Lie algebra. This is defined as follows. For any
two homogeneous homomorphisms  $f\in \mathrm{Hom}(T^n(V),V)$
and $g\in \mathrm{Hom} (T^{m}(V),V)$, one first defines a (non-associative) {\it composition } product \cite{GSch} as 
\begin{equation}\label{comp}
(f\circ g)(v_1\otimes v_2\otimes\cdots\otimes v_{m+n-1} )
\end{equation}
$$ =\sum_{i=0}^{n-1}(-1)^{|g|\sum_{j=1}^i|v_j|} f(v_1\otimes \cdots\otimes v_i\otimes g(v_{i+1}\otimes \cdots\otimes v_{i+m})\otimes \cdots \otimes v_{m+n-1}).
$$
Here $|g|$ denotes the degree of $g$ as a linear map of
graded vector spaces\footnote{We define 
the degree of multi-linear maps as in  \cite{KS}. A more conventional
$\mathbb{Z}$-grading \cite{Gerst}, \cite{Tsygan} on $\mathrm{Hom}(T(V),V)$ is
related to ours by {\it suspension}:  $V\rightarrow V[-1]$, where 
$V[-1]^l=V^{l-1}$.}. 
Then the graded Lie bracket on (\ref{Hom}) is given by the {\it Gerstenhaber bracket} \cite{Gerst}
\begin{equation}\label{GB}
[f,g]=f\circ g-(-1)^{|f||g|}g\circ f\,. 
\end{equation}
One can see that the Gerstenhaber bracket is graded skew-symmetric,
$$
[f,g]=-(-1)^{|f||g|}[g,f]\,,
$$
and obeys the graded Jacobi identity
$$
[[f,g],h]=[f,[g,h]]-(-1)^{|f||g|}[g,[f,h]]\,.
$$
In particular, $[f,f]=2f\circ f$ for any odd $f$. 

\begin{definition}
 An {$A_{\infty}$-structure} on a $\mathbb{Z}$-graded vector space $V$ is given by an
element $m\in \mathrm{Hom}^1(T(V),V)$ obeying the Maurer--Cartan (MC) equation
\begin{equation}\label{MC}
m\circ m=0\,.
\end{equation}
The pair $(V, m)$ is called the {$A_\infty$-algebra}.
\end{definition}

By definition, each  $A_\infty$-structure $m$ is given by an (infinite) sum $m=m_0+m_1+m_2+\ldots$ of
multi-linear maps $m_n\in \mathrm{Hom}(T^n(V),V)$. Expanding (\ref{MC}) into homogeneous components yields  an infinite collection of quadratic relations on the $m_n$'s, which are known as the Stasheff identities
\cite{St}. An $A_{\infty}$-algebra is called {\it flat} if $m_0=0$.
For flat algebras, the first structure map $m_1:
V^{l}\rightarrow V^{l+1}$ squares to zero, $m_1\circ m_1=m_1^2=0$; hence, it 
makes  $V$ into a cochain complex.  An
$A_\infty$-algebra is called {\it minimal} if $m_0=m_1=0$. In the minimal case, the second structure map  $m_2: V\otimes V\rightarrow V$ endows the space $V[-1]$ with the structure of a graded associative algebra w.r.t. the dot product
\begin{equation}\label{bbb}
u\cdot v=(-1)^{|u|-1}m_2(u\otimes v)\,,
\end{equation}
associativity being provided by the Stasheff identity
$m_2\circ m_2=0$. This allows one to regard  a graded associative algebra
as a `very degenerate'  $A_\infty$-algebra with $m=m_2$.  More generally, an
$A_\infty$-structure $m=m_1+m_2$ gives rise to a  DGA  $(V[-1],d, \cdot )$ with the product (\ref{bbb}) and
the differential $d=m_1$. The graded Leibniz rule
$$
d(u\cdot v)=du\cdot v+(-1)^{|u|-1}u\cdot dv
$$
follows from the Stasheff identity $[m_1,m_2]=0$.

The composition product (\ref{comp}) is a representative of the infinite sequence of multi-linear operations on  $\mathrm{Hom}(T(V),V)$ known as {\it braces}. The braces first appeared in the work of Kadeishvili \cite{Kad} and were then studied by several authors \cite{Getz, GV, GJ}. To  simplify subsequent formulas, let us denote $W=\mathrm{Hom}(T(V),V)$. 

\begin{definition}
 Given homogeneous elements  $A, A_1,\ldots, A_m\in W$ and  $v_1,\ldots,v_n\in V$, define the braces $A\{A_1,\ldots, A_m\}\in W$, $m=0,1,2,\ldots$,  by the formula
\begin{equation}
\begin{array}{c}
  A\{A_1,\ldots, A_m\}(v_1,\ldots,v_n) \\[5mm] \displaystyle
    =\sum_{0\leq k_1\leq \cdots\leq k_m\leq n} (-1)^\epsilon A(v_1,\ldots, v_{k_1}, A_1(v_{k_1+1}, \ldots), \\[5mm]  \hspace{3cm}\ldots,v_{k_m}, A_{m}(v_{k_m+1},\ldots),\ldots,v_n)\,,
\end{array}
\end{equation}
where $\epsilon =\sum_{i=1}^m|A_i|\sum_{j=1}^{k_i}|v_j|$. It is assumed that $A\{\varnothing\}=A$. 
\end{definition}
It follows from the definition that 
\begin{equation}\label{circ}
      A\{A_1\}=A\circ A_1\,.
\end{equation}

The braces obey the so-called higher pre-Jacobi identities \cite{GV}
\begin{equation}\label{pre-J}
\begin{array}{c}
    A\{A_1,\ldots, A_m\}\{B_1,\ldots, B_n\}\\[5mm]
\displaystyle =\sum_{\mbox{\small $AB$-shuffles}} (-1)^\epsilon A\{B_1,\ldots, B_{k_1},A_1\{B_{k_1+1}, \ldots\},\\[7mm]
\hspace{2,5cm}\ldots, B_{k_m}, A_m\{B_{k_m+1},\ldots\},\ldots, B_n\}\,,
\end{array}
\end{equation}
where $\epsilon=\sum_{i=1}^m|A_i|\sum_{j=1}^{k_i}|B_j|$.
Here summation is over all shuffles of the $A$'s and $B$'s (i.e., 
the order of elements in either group is preserved under permutations) and
the case of empty braces  $A_k\{\varnothing\}$ is not excluded.

In \cite{Getz}, Getzler have shown that any $A_\infty$-structure $m$ on $V$ can be lifted to a flat $A_\infty$-structure $M$ on $W$ by setting
\begin{equation}\label{M}
\begin{array}{l}
   M_0(\varnothing)=0\,,\\[5mm]
   M_1(A)=m\circ A-(-1)^{|A|}A\circ m \,,\\[5mm]
M_k(A_1,\ldots,A_k)=m\{A_1,\ldots,A_k\}\,,\qquad k>1\,.
   \end{array}
\end{equation}
Indeed, by the definition of the composition product (\ref{comp})
\begin{equation}
    (M\circ M)(A_1,\ldots, A_n)
\end{equation}
$$
=\sum_{0\leq i
    \leq j\leq n}(-1)^{\varepsilon} M(A_1,\ldots, A_{i-1}, M(A_i,\ldots,A_j), A_{j+1},\ldots, A_n)\,,
$$
where $\varepsilon=\sum_{j=1}^{i-1}|A_j|$. This gives
\begin{equation}\label{MM}
\begin{array}{c}
(M\circ M)(A_1,\ldots, A_n)\\[7mm]=\displaystyle \sum_{0\leq i
    \leq j\leq n}(-1)^{\varepsilon} m\{A_1,\ldots, A_{i-1}, m\{A_i,\ldots,A_j\}, A_{j+1},\ldots, A_n\}
\\[7mm]
-\displaystyle \sum_{1\leq i
    \leq n}(-1)^{\varepsilon} m\{A_1,\ldots, A_{i-1}, A_i\circ m, A_{i+1},\ldots, A_n\}
\\[7mm]
+\displaystyle (-1)^{\sum_{i=1}^n|A_i|} m\{A_1,\ldots,A_n\}\circ m\,.
\end{array}
\end{equation}
Using the pre-Jacobi identities (\ref{pre-J}), one can rewrite the last term as 
$$
(-1)^{\sum_{i=1}^n|A_i|} m\{A_1,\ldots,A_n\}\circ m
$$
$$
=\sum_{1\leq i
    \leq n}(-1)^{\varepsilon} \Big(m\{A_1,\ldots, A_{i-1}, A_i\circ m, A_{i+1},\ldots, A_n\}
$$
$$
+  m\{A_1,\ldots, A_i, m, A_{i+1},\ldots, A_n\}\Big)\,.
$$
Then the r.h.s. of Eq. (\ref{MM}) takes the form of 
$$
    (m\circ m)\{A_1,\ldots, A_n\}=m\{m\}\{A_1,\ldots, A_n\}
$$
$$
=\sum_{\mbox{\small $mA$-shuffles}}(-1)^{\varepsilon} m\{A_1,\ldots, A_{i-1}, m\{A_i,\ldots,A_j\}, A_{j+1},\ldots, A_n\}\,.
$$
Hence, 
$$
(M\circ M)(A_1,\ldots,A_n)=(m\circ m)\{A_1,\ldots,A_n\}=0\,.
$$

In what follows we will refer to (\ref{M}) as the {\it derived} $A_\infty$-structure.\footnote{Do not confuse with the derived $A_\infty$-algebras in the sense of Sagave \cite{S}. }

\section{$A_\infty$-cohomology}

If $(V,m)$ is an $A_\infty$-algebra, then the first map 
of the derived $A_\infty$-structure (\ref{M}) makes the graded vector space $W=\bigoplus W^n$ into a cochain complex w.r.t. the differential $M_1: W^n\rightarrow W^{n+1}$. Let ${H}^n(W)$ denote the corresponding cohomology groups.\footnote{For an associative algebra $A$, this cohomology is simply the Hochschild cohomology of the algebra. In that case one normally uses a more standard notation $HH^{n+1}(A,A)$ for the groups $H^{n}
(W)$.} Following \cite{PSh}, we refer to them as $A_\infty$-cohomology groups. The most interesting for us are the groups ${H}^1(W)$ and ${H}^2(W)$, which control the formal deformations of the underlying $A_\infty$-structure $m$. Let us give some relevant definitions.  

When dealing with formal deformations of algebras, one first extends the ground field $k$ to the algebra $k[[t]]$, with the formal variable $t$ playing the role of  a {\it deformation parameter}.  Since $k[[t]]$ is commutative, the graded Lie algebra structure on $W$ extends naturally to $W\otimes k[[t]]$ and then to its completion $\mathcal{W}=W\hat{\otimes} k[[t]]$ w.r.t. the $t$-adic topology. By definition, the  elements of $\mathcal{W}$ are given by the formal power series 
\begin{equation}\label{m-t}
m_t=m^{_{(0)}}+m^{_{(1)}}t+m^{_{(2)}}t^2+\cdots\,,\qquad m^{_{(i)}}\in W\,.
\end{equation}
The natural augmentation $\varepsilon: k[[t]]\rightarrow k$ induces the $k$-homomorphism $\pi: \mathcal{W}\rightarrow W$, which sends the deformation parameter to zero. We say that an MC element $m_t\in \mathcal{W}^1$ is a deformation of  $m\in W^1$ if $\pi(m_t)=m$ or, what is the same, $m_{(0)}=m$ in (\ref{m-t}). Extending now the homomorphism $m_t$ from $V$ to $\mathcal{V}=V\otimes k[[t]]$ by $k[[t]]$-linearity and $t$-adic continuity, we get an $A_\infty$-algebra $(\mathcal{V},m_t)$ which is referred to as the deformation of the algebra $(V,m)$. 
The element $m_{(1)}$ in (\ref{m-t}) is called the {\it first-order deformation} of $m$. 

Two formal deformations $m_t$ and $\tilde{m}_t$ of one and the same $A_\infty$-structure $m$ are considered as { equivalent} if there exists an element $w\in \mathcal{W}^0$ such that $$
\tilde{m}_t=e^{tw}m_t e^{-tw}=\sum_{n=0}^\infty\frac{t^n}{n!}(\mathrm{ad}_w)^n (m_t)\,.
$$
This induces an equivalence relation on the space of first-order deformations and it is the standard fact of 
algebraic deformation theory (see e.g. \cite{PSh}) that the space of nonequivalent first-order deformations is isomorphic to  $H^1(W)$.   If in addition $H^2(W)=0$, then each first-order deformation extends to all orders. 

Since the differential $M_1$ is, by definition, an inner derivation of the graded Lie algebra $W$, the Gerstenhaber bracket (\ref{GB}) induces a Lie bracket on the cohomology space $H^{\bullet}(W)$, for which we use the same bracket notation.  The graded Lie algebra structure on $H^{\bullet}(W)$ can further be extended to the structure of a graded Poisson (or Gerstenhaber) algebra w.r.t. a cup product. The latter is defined as follows. 

By definition, the second structure map $M_2: W\otimes W\rightarrow W$ of the derived $A_\infty$-algebra obeys the identity 
\begin{equation}\label{M12}
M_1(M_2(A,B))+M_2(M_1(A),B)+(-1)^{|A|}M_2(A,M_1(B))=0
\end{equation}
for all $A,B\in W$. From this relation we conclude that (i) $M_2(A,B)\in W$ is an $M_1$-cocycle whenever $A$ and $B$ are so and (ii) the cocycle $M_2(A,B)$ is trivial whenever one of the cocycles $A$ and $B$ is an $M_1$-coboundary. To put this another way,  the map $M_2$ descends to the cohomology inducing a homomorphism    
$$
M_2^\ast: H^n(W)\otimes H^m(W)\rightarrow H^{n+m+1}(W)\,.
$$
We can interpret this homomorphism as a multiplication operation making the suspended vector space $H^{\bullet-1}(W)$ into a $\mathbb{Z}$-graded algebra. More precisely, we set  
\begin{equation}\label{cup}
a \cup b= (-1)^{|A|-1}M_2(A,B)\,,
\end{equation}
where $A, B\in W$ are cocycles representing the cohomology classes $a, b\in H^{\bullet-1}(W)$.
The properties of the cup product are described by the following proposition. 

\begin{proposition}
The cup product (\ref{cup}) endows the  space $H^{\bullet-1}(W)$ with the structure of an associative and graded commutative algebra.
\end{proposition}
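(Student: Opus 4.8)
The plan is to take as given the content of (\ref{M12}): since $M_2$ is an $M_1$-chain map, the induced map $M_2^\ast$ and hence the cup product (\ref{cup}) are already well defined on the suspended space $H^{\bullet-1}(W)$. What remains is to verify associativity and graded commutativity at the level of cohomology classes, and for both I would work with explicit cocycle representatives $A,B,C$ and reduce the desired identity, modulo $M_1$-coboundaries, to an identity among braces.

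For associativity I would extract the arity-three component of the Stasheff identity $M\circ M=0$ for the derived structure (\ref{M}), which is legitimate because that structure is flat. Recalling $M_3(A,B,C)=m\{A,B,C\}$, this component reads
\[
M_2(M_2(A,B),C)+(-1)^{|A|}M_2(A,M_2(B,C))+M_1(M_3(A,B,C))+M_3(M_1A,B,C)+(-1)^{|A|}M_3(A,M_1B,C)+(-1)^{|A|+|B|}M_3(A,B,M_1C)=0 .
\]
When $A,B,C$ are $M_1$-cocycles every term containing $M_1$ applied to an argument drops out, leaving $M_2(M_2(A,B),C)\equiv-(-1)^{|A|}M_2(A,M_2(B,C))$ modulo the coboundary $M_1(M_3(A,B,C))$. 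Substituting into (\ref{cup}) and tracking the suspension signs—the representative of $a\cup b$ sits in degree $|A|+|B|+1$—converts this into the strict equality $(a\cup b)\cup c=a\cup(b\cup c)$. This step is essentially formal: it holds for the $M_2$-product on the cohomology of any flat $A_\infty$-algebra.

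Graded commutativity is the real content, and here I would invoke the brace structure directly through a Gerstenhaber-type homotopy. Writing $M_1(X)=m\{X\}-(-1)^{|X|}X\{m\}$ and using $X\circ Y=X\{Y\}$ from (\ref{circ}), I would expand the three quantities $M_1(A\{B\})$, $M_1(A)\{B\}$ and $A\{M_1(B)\}$ via the pre-Jacobi identity (\ref{pre-J}). Each expansion generates the same pool of ``mixed'' brace terms, namely $m\{A\{B\}\}$, $A\{m\{B\}\}$, $A\{B\{m\}\}$, $A\{m,B\}$, $A\{B,m\}$, together with the two terms $m\{A,B\}$ and $m\{B,A\}$ to be compared. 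A sign-accurate bookkeeping shows that the combination $-M_1(A\{B\})+M_1(A)\{B\}+(-1)^{|A|}A\{M_1(B)\}$ cancels every mixed term and leaves
\[
m\{A,B\}+(-1)^{|A||B|}m\{B,A\}=-M_1(A\{B\})+M_1(A)\{B\}+(-1)^{|A|}A\{M_1(B)\}.
\]
For cocycles $A,B$ the last two terms on the right vanish and the first is a coboundary, so $M_2(A,B)\equiv-(-1)^{|A||B|}M_2(B,A)$; inserting this into (\ref{cup}) and again accounting for the suspension degrees $|a|=|A|-1$, $|b|=|B|-1$ yields $a\cup b=(-1)^{|a||b|}\,b\cup a$.

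I expect the homotopy formula of the third paragraph to be the main obstacle. Associativity is automatic for every $A_\infty$-algebra, whereas commutativity is a genuine feature of the brace (derived) structure, and the only delicate work is to verify this identity by a careful application of the pre-Jacobi identity (\ref{pre-J}) with all signs under control. Once it is established, both asserted properties follow by the elementary sign-chasing through (\ref{cup}) indicated above.
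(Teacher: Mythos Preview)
Your proposal is correct and follows essentially the same route as the paper: associativity via the arity-three Stasheff identity (which the paper writes compactly as $M_2\circ M_2=-[M_1,M_3]$), and graded commutativity via the Gerstenhaber-type homotopy $D(A,B)=M_1(A\circ B)-M_1(A)\circ B-(-1)^{|A|}A\circ M_1(B)$, which the pre-Jacobi identities reduce to $-M_2(A,B)-(-1)^{|A||B|}M_2(B,A)$---exactly your third-paragraph identity up to an overall sign. Your final commutativity sign $(-1)^{|a||b|}$ agrees with the paper's $(-1)^{(|a|-1)(|b|-1)}$ once one notes that the paper keeps $|a|=|A|$ as the degree in $H^\bullet(W)$ rather than in the suspended space.
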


\begin{proof}
Associativity follows immediately from the Stasheff identity 
$$
M_2\circ M_2=-[M_1,M_3]\,.
$$
The r.h.s. obviously vanishes when evaluated on $M_1$-cocycles modulo coboundaries, while the l.h.s. takes the form
of the associativity condition
$$
(a\cup b)\cup c-a\cup (b\cup c)=0\,.
$$

The proof of graded commutativity is a bit more cumbersome.  Consider the cochain 
$$
D(A,B)= M_1(A\circ B)-M_1(A)\circ B-(-1)^{|A|} A\circ M_1(B)\,,
$$
which measures  the deviation of $M_1$ from being a derivation of the composition product.  Using the definitions (\ref{circ}) and (\ref{M}), we can write 
\begin{equation}\label{D}
\begin{array}{rcl}
D(A,B)&=& m\{A\{B\}\}-(-1)^{|A|+|B|}A\{B\}\{m\}\\[5mm]
&-&m\{A\}\{B\}+(-1)^{|A|}A\{m\}\{B\}\\[5mm]
&-&(-1)^{|A|}A\{m\{B\}\}+(-1)^{|A|+|B|}A\{B\{m\}\}\,.
\end{array}
\end{equation}
It follows from the  pre-Jacobi identities  (\ref{pre-J}) that  
$$
m\{A\}\{B\}= m\{A,B\}+m\{A\{B\}\}+(-1)^{|A||B|}m\{B,A\}\,.
$$
Applying similar transformations to the other terms in (\ref{D}), we find that all but two terms cancel leaving  
$$
D(A,B)=-M_2(A,B)-(-1)^{|A||B|}M_2(B,A)\,.
$$
It remains to note that for any pair of cocycles $A$ and $B$ the cochain $D(A,B)$ is a coboundary, whence 
$$a\cup b=(-1)^{(|a|-1)(|b|-1)}b\cup a\,.$$
\end{proof}

Notice that for graded associative algebras 
the associativity of the cup product (\ref{cup}) takes place at the level of cochains. This product, however, may not be graded commutative until passing to the Hochschild cohomology.  
\begin{proposition}\label{PR}
The cup product and the Gerstenhaber bracket satisfy the graded Poisson relation
$$
[a,b\cup c] =[a,b]\cup c+(-1)^{|a|(|b|+1)} b\cup [a,c]\qquad \forall a,b,c\in H^\bullet(W)\,.
$$
\end{proposition}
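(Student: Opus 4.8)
The plan is to verify the Poisson relation at the level of cochains, exhibiting the difference of its two sides as an $M_1$-coboundary whenever the chosen representatives are $M_1$-cocycles; this is the same coboundary strategy used for graded commutativity in the preceding proposition. I will freely use that $M_1=\mathrm{ad}_m=[m,-]$ is an inner derivation of (\ref{GB}), that $M_2(B,C)=m\{B,C\}$ and $a\cup b=(-1)^{|A|-1}M_2(A,B)$ as in (\ref{M}) and (\ref{cup}), and that the bracket on $H^\bullet(W)$ is induced by the Gerstenhaber bracket, so that on representatives $[A,X]=A\{X\}-(-1)^{|A||X|}X\{A\}$ by (\ref{circ}).

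First I would unfold the left-hand side $[A,m\{B,C\}]$. The term $(m\{B,C\})\{A\}$ is expanded by the higher pre-Jacobi identity (\ref{pre-J}): inserting the single entry $A$ into the configuration $m\{B,C\}$ yields the two interior terms $m\{B\{A\},C\}$ and $m\{B,C\{A\}\}$ together with the three exterior terms $m\{A,B,C\}$, $m\{B,A,C\}$, $m\{B,C,A\}$, the latter being values of $M_3$. In parallel I would expand the right-hand side, writing $[A,B]=A\{B\}-(-1)^{|A||B|}B\{A\}$ and likewise for $[A,C]$; then $[A,B]\cup C$ and $b\cup[A,C]$ produce, via (\ref{cup}), precisely the interior combinations $m\{A\{B\},C\}$, $m\{B\{A\},C\}$, $m\{B,A\{C\}\}$, $m\{B,C\{A\}\}$. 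Matching signs, the two insertions of $A$ into $B$ and $C$, namely $m\{B\{A\},C\}$ and $m\{B,C\{A\}\}$, cancel between the two sides.

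The remaining task is to show that the surviving cochain $R$, built from $A\{m\{B,C\}\}$ and the three $M_3$-terms coming from the left, minus $(A\{B\})\cup C$ and $b\cup(A\{C\})$ coming from the right, is $M_1$-exact on cocycles. Conceptually $R$ is exactly the defect measuring the failure of the single brace $A\{-\}$ to differentiate the cup product, corrected by $M_3$; it is the homotopy-Gerstenhaber compatibility of the braces on $W$. I would produce an explicit homotopy $H$, assembled from the two- and three-argument braces (such as $A\{B,C\}$ and $m\{A,B,C\}$), and verify using the pre-Jacobi identities together with the higher Stasheff identities of the derived structure (the arity-three instance being $M_2\circ M_2=-[M_1,M_3]$) that $M_1H$ equals $R$ modulo terms each proportional to one of $M_1A$, $M_1B$, $M_1C$. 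On $M_1$-cocycles these proportional terms vanish, so $R=M_1H$ is a coboundary and represents zero in $H^\bullet(W)$, which is the Poisson relation.

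The main obstacle I anticipate is entirely computational: correctly enumerating the $AB$-shuffles in (\ref{pre-J}), carrying the Koszul signs $(-1)^\epsilon$ so that the interior terms cancel as claimed, and, above all, pinning down the homotopy cochain $H$ whose $M_1$-image reproduces the $M_3$-corrected derivation defect $R$. Verifying that the sign accumulated on the $b\cup[a,c]$ term is exactly $(-1)^{|a|(|b|+1)}$ — the Koszul sign for commuting the degree-$|a|$ operator $[a,-]$ past the cup factor $b$ of suspended degree $|b|-1$ — is a further place where care is required, but it carries no conceptual difficulty once $H$ is in hand.
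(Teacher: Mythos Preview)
Your strategy is the paper's: exhibit the Poisson defect on representatives as an $M_1$-coboundary plus terms proportional to $M_1A$, $M_1B$, $M_1C$, using the pre-Jacobi identities to do the bookkeeping. You also correctly name the homotopy candidate $A\{B,C\}$. Where you over-complicate is in hedging that $H$ might also involve $m\{A,B,C\}$ and in invoking the derived Stasheff relation $M_2\circ M_2=-[M_1,M_3]$. Neither is needed. The paper writes down a single cochain identity valid for \emph{all} $A,B,C\in W$,
\[
\begin{array}{c}
[A,M_2(B, C)]-(-1)^{|A|}M_2([A,B], C) -(-1)^{|A|(|B|+1)}M_2(B, [A,C])\\[3mm]
=(-1)^{|A|} \Big( M_1(A\{B,C\}) - M_1(A)\{B,C\}
-(-1)^{|A|}A\{M_1(B),C\}-(-1)^{|A|+|B|}A\{B,M_1(C)\}\Big),
\end{array}
\]
whose verification uses only the pre-Jacobi identities (\ref{pre-J}). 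The three $M_3$-terms $m\{A,B,C\}$, $m\{B,A,C\}$, $m\{B,C,A\}$ you extracted from $(m\{B,C\})\{A\}$ are cancelled not by a Stasheff relation but by the expansion of $M_1(A)\{B,C\}$: applying pre-Jacobi to $m\{A\}\{B,C\}$ (which sits inside $M_1(A)\{B,C\}$) produces exactly those same three terms. So the homotopy is just $H=A\{B,C\}$, and once you carry the Koszul signs through, the identity stands on its own without any appeal to $M_2\circ M_2=-[M_1,M_3]$.
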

\begin{proof}
The Poisson relation follows from the identity 
$$
\begin{array}{c}
[A,M_2(B, C)]-(-1)^{|A|}M_2([A,B], C) -(-1)^{|A|(|B|+1)}M_2(B, [A,C])\\[5mm]
=(-1)^{|A|} \Big( M_1(A\{B,C\}) - M_1(A)\{B,C\}\\[5mm]
-(-1)^{|A|}A\{M_1(B),C\}-(-1)^{|A|+|B|}A\{B,M_1(C)\}\Big)\,,
\end{array}
$$
which holds for all $A,B,C\in W$. One can verify it directly by making use of the pre-Jacobi identities (\ref{pre-J}). 
\end{proof}

Thus, the cup product and the Gerstenhaber bracket define the structure of a graded Poisson algebra on the $A_\infty$-cohomology $H^\bullet(W)$.

\begin{remark}
The structure of a graded Poisson algebra on the Hochschild cohomology $HH^\bullet(A,A)$ of an associative algebra $A$ was first observed by Gerstenhaber \cite{Gerst}. One can view the  two propositions above as a straightforward extension of  Gerstenhaber's results to the case of $A_\infty$-algebras.  
\end{remark}

\section{Inner deformations of families}

\subsection{Families of algebras} Let $\mathcal{A}_t$ be an $n$-parameter, formal  deformation of an $A_\infty$-algebra $\mathcal{A}$, i.e., the $A_\infty$-structure on $\mathcal{A}_t$ is given by an element $m\in\mathcal{W}= W[[t_1,\ldots, t_n]]$ such that $m\circ m=0$ and $m|_{t=0}$ gives the products in $\mathcal{A}$. Here we allow the deformation parameters $t_i$ to have  non-zero $\mathbb{Z}$-degrees contributing to the total degree $|m|=1$ of $m$ as an element of $\mathcal{W}^1$.  For the sake of simplicity, however, we restrict ourselves to the case where all the degrees $|t_i|$ are {\it even}. Extension to the general case is straightforward (see Remark \ref{VF} below). In the following we will refer to $\mathcal{A}_t$ as a family of $A_\infty$-algebras.  

Let us denote 
$$
m_{(i_1i_2\cdots i_k)}=\frac{\partial^k m}{\partial t_{i_1}\partial t_{i_2}\cdots \partial t_{i_k}}\in \mathcal{W}\,.
$$
Clearly, $|m_{(i_1i_2\cdots i_k)}|={1 -|t_{i_1}|-\ldots-|t_{i_k}|}$.  Taking the partial derivative of the defining relation $m\circ m=0$ w.r.t. the parameter $t_{i}$, we get 
$$[m, m_{(i)}]=M_1(m_{(i)})=0\,.$$ 
In other words, the cochain $m_{(i)}$ is a cocycle of the differential $M_1$ associated to the $A_\infty$-structure $m$. So, $m_{(i)}$ defines a cohomology class of $H^{1-|t_i|}(\mathcal{W})$.

Denote by $\mathcal{D}_m$ the subalgebra in the graded Poisson algebra $H^{\bullet}(\mathcal{W})$ generated by the cocycles $m_{(i)}$. 
\begin{proposition}
The Gerstenhaber bracket on $\mathcal{W}$ induces the trivial Lie bracket on $\mathcal{D}_m$. 
\end{proposition}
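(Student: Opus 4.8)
The plan is to combine one explicit cochain-level computation with the graded Poisson relation of Proposition~\ref{PR}. The computation will show that the bracket of any two generators $m_{(i)},m_{(j)}$ is an $M_1$-coboundary, and the Poisson relation will then propagate this vanishing from the generators to the whole subalgebra $\mathcal{D}_m$.

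First I would establish the key identity at the level of cochains in $\mathcal{W}$. The generators already satisfy the cocycle condition $[m,m_{(i)}]=M_1(m_{(i)})=0$ obtained above by differentiating $m\circ m=0$. Differentiating this relation once more with respect to $t_j$ and using that $\partial_{t_j}$ is an even derivation (this is where the standing assumption that the $|t_i|$ are even keeps the Leibniz rule free of extra signs), I obtain
$$
[m_{(j)},m_{(i)}]+[m,m_{(ij)}]=0\,,
$$
where $m_{(ij)}=\partial_{t_i}\partial_{t_j}m$. Hence $[m_{(i)},m_{(j)}]=\pm M_1(m_{(ij)})$ is an $M_1$-coboundary, so its cohomology class vanishes: $[m_{(i)},m_{(j)}]=0$ in $H^\bullet(\mathcal{W})$. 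The conceptual content of this step is that differentiating the Maurer--Cartan equation a second time produces a coboundary rather than a genuine cohomological obstruction, and this is exactly what will force the induced bracket to vanish.

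Next I would promote this to all of $\mathcal{D}_m$ using the derivation property of the bracket. Let $\mathcal{C}\subseteq\mathcal{D}_m$ denote the subalgebra of $(H^\bullet(\mathcal{W}),\cup)$ generated by the classes $m_{(i)}$ under the cup product alone. By Proposition~\ref{PR}, for each fixed class $a$ the operator $[a,-]$ is a graded derivation of $\cup$; since $[m_{(i)},m_{(j)}]=0$ from the previous step, the derivation $[m_{(i)},-]$ annihilates all generators and hence all of $\mathcal{C}$, giving $[m_{(i)},x]=0$ for every $x\in\mathcal{C}$. Now fix $x\in\mathcal{C}$ and consider its centralizer $\mathcal{Z}_x=\{a\in\mathcal{C}:[a,x]=0\}$. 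It contains all the generators by what was just shown, and it is closed under $\cup$: if $a,b\in\mathcal{Z}_x$, then graded skew-symmetry together with the Poisson relation gives
$$
[a\cup b,x]=\pm[x,a\cup b]=\pm\big([x,a]\cup b\pm a\cup[x,b]\big)=0\,,
$$
because $[x,a]=\pm[a,x]=0$ and likewise $[x,b]=0$. Hence $\mathcal{Z}_x=\mathcal{C}$, i.e. $[a,x]=0$ for all $a,x\in\mathcal{C}$. Consequently $\mathcal{C}$ is closed under the Gerstenhaber bracket, so it is a Poisson subalgebra containing the $m_{(i)}$; therefore $\mathcal{C}=\mathcal{D}_m$ and the induced bracket on $\mathcal{D}_m$ is trivial.

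I do not expect a substantive obstacle here: the argument is a one-line derivative computation followed by a standard ``centralizer is a subalgebra'' induction. The only points requiring care are the bookkeeping of the Koszul signs in the second Leibniz rule and in the skew-symmetry step, and the verification that the second mixed derivative $m_{(ij)}$ indeed lies in $\mathcal{W}$ so that $M_1(m_{(ij)})$ makes sense as a genuine coboundary.
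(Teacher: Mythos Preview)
Your proof is correct and follows the same approach as the paper: differentiate $m\circ m=0$ twice to exhibit $[m_{(i)},m_{(j)}]$ as an $M_1$-coboundary, then invoke the Poisson relation (Proposition~\ref{PR}) to extend the vanishing to arbitrary cup products of generators. Your centralizer argument spells out the induction in more detail than the paper, and your distinction between the cup-generated subalgebra $\mathcal{C}$ and the Poisson subalgebra $\mathcal{D}_m$ is a nice extra care---indeed, once the bracket vanishes on $\mathcal{C}$ the two coincide, which is precisely why the paper can afterwards describe $\mathcal{D}_m$ as consisting of cup polynomials.
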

\begin{proof}
Differentiating the relation $m\circ m=0$ twice, we get 
$$
[m, m_{(i,j)}]=-[m_{(i)}, m_{(j)}]\,.
$$
Hence, the bracket $[m_{(i)}, m_{(j)}]$ is an $M_1$-coboundary. By Proposition  (\ref{PR}) this result is extended to arbitrary cup products of $m_{(i)}$'s.

\end{proof}

\subsection{Inner deformations} We see that the algebra $\mathcal{D}_m$ is generated by the cup products of the partial derivatives $m_{(i)}$, so that the elements of $\mathcal{D}_m$ are represented by cup polynomials\footnote{By abuse of notation, we write the cup product for the cocycles rather than their cohomology classes.}
\begin{equation}\label{pol}
\Delta=\sum_{l=0}^L c^{i_1\cdots i_l}m_{(i_1)}\cup m_{(i_2)}\cup\cdots \cup m_{(i_l)}\,,
\end{equation}
where $c^{i_1\cdots i_l}\in k[[t_1,\ldots,t_n]]$. Note that with our restriction on the degrees of $t$'s the graded associative algebra $\mathcal{D}_m=\bigoplus \mathcal{D}_m^l$ is purely commutative as it consists only of even elements.   
\begin{proposition}\label{def}
Let $m\in \mathcal{W}^1$ be an $n$-parameter family of $A_\infty$-structures and let $\Delta$ be a cocycle representing an element of $\mathcal{D}_m^l$. Then we can define an $(n+1)$-parameter family of $A_\infty$-structures $\tilde{m}\in W[[t_0,t_1,\cdots,t_n]]$ as a unique formal solution to the differential equation 
\begin{equation}\label{tm}
\tilde{m}_{(0)}=\Delta[\tilde m]
\end{equation}
subject to the initial condition $\tilde{m}|_{t_{0}=0}=m$. Here the new formal parameter $t_{0}$ has degree $1-l$. 
\end{proposition}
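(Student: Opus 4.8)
The statement bundles two assertions: the existence and uniqueness of the formal solution $\tilde m$, and the fact that the resulting $\tilde m$ is again an $A_\infty$-structure, i.e. $\tilde m\circ\tilde m=0$. The first is a routine formal ODE argument; the second is where the real content lies.

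For existence and uniqueness I would expand $\tilde m=\sum_{k\ge 0}\tilde m_k t_0^k$ with $\tilde m_k\in W[[t_1,\dots,t_n]]$ and match $t_0^k$-coefficients in (\ref{tm}). The left-hand side contributes $(k+1)\tilde m_{k+1}$, while the right-hand side is a fixed polynomial expression in $\tilde m$ and its $t_i$-derivatives (iterated cup products built from the braces $\tilde m\{-,-\}$) of $\tilde m$-degree $2l-1\ge 1$; hence its $t_0^k$-coefficient involves only $\tilde m_0,\dots,\tilde m_k$. Since $k$ has characteristic zero, division by $k+1$ is legitimate, so the recursion determines $\tilde m_{k+1}$ uniquely from the lower orders, with $\tilde m_0=m$ fixed by the initial condition. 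This yields the unique formal solution.

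For the second assertion I set $P=\tilde m\circ\tilde m\in\mathcal W^2$ and aim to prove $P=0$. Because $t_0$ has degree $1-l$, the operator $\partial_{t_0}$ has degree $l-1$, and the Koszul rule together with $\tilde m_{(0)}=\Delta[\tilde m]$ gives
$$\partial_{t_0}P=(-1)^{l-1}[\tilde m,\Delta[\tilde m]]=(-1)^{l-1}\tilde M_1(\Delta[\tilde m]),$$
where $\tilde M_1=[\tilde m,-]$; moreover $P|_{t_0=0}=m\circ m=0$. The plan is to show that $\tilde M_1(\Delta[\tilde m])$ is $k[[t]]$-linear and homogeneous in $P$, that is, every term carries a factor $P$ or $\partial_{t_i}P$. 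Granting this, the displayed relation becomes a homogeneous linear first-order equation $\partial_{t_0}P=L_{t_0}(P)$ in the formal variable $t_0$ with $P|_{t_0=0}=0$, whose unique formal solution is $P\equiv 0$ (order by order, each $t_0$-coefficient of $P$ is forced to vanish). This is exactly the argument that a Maurer--Cartan element is preserved by the flow.

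The heart of the matter, and the main obstacle, is this linearity claim. Two inputs drive it. The first is the off-shell identity established in Sec.~2, $(\tilde M\circ\tilde M)(A_1,\dots,A_n)=(\tilde m\circ\tilde m)\{A_1,\dots,A_n\}=P\{A_1,\dots,A_n\}$, which was derived \emph{without} using the MC equation and hence remains valid when $\tilde m\circ\tilde m=P\neq 0$; its two-argument component is the off-shell version of (\ref{M12}),
$$\tilde M_1(\tilde M_2(A,B))+\tilde M_2(\tilde M_1 A,B)+(-1)^{|A|}\tilde M_2(A,\tilde M_1 B)=P\{A,B\},$$
so $\tilde M_1$ fails to be a derivation of the cup product only by a term linear in $P$. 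The second is $\tilde M_1(\tilde m_{(i)})=[\tilde m,\tilde m_{(i)}]=\partial_{t_i}P$, obtained as in Sec.~4.1 by differentiating $\tilde m\circ\tilde m$ (here one uses that all $|t_i|$ are even). I would then apply $\tilde M_1$ to the cup polynomial $\Delta[\tilde m]=\sum c^{i_1\cdots i_l}\tilde m_{(i_1)}\cup\cdots\cup\tilde m_{(i_l)}$ and induct on the cup-length: the modified Leibniz rule above peels off one factor at a time, producing either a term carrying $\tilde M_1(\tilde m_{(i_1)})=\partial_{t_{i_1}}P$, or the defect $P\{-,-\}$, or a term $\tilde m_{(i_1)}\cup\tilde M_1(\text{shorter polynomial})$ to which the inductive hypothesis applies; since $\cup$ is bilinear, $P$-linearity propagates through. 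Tracking the pre-Jacobi signs through these iterated cup products is the only delicate bookkeeping; once it is done, the formal ODE argument of the previous paragraph closes the proof.
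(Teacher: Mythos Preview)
Your approach coincides with the paper's: set $\lambda=[\tilde m,\tilde m]$ (your $2P$), differentiate in $t_0$ via the flow equation to get $\partial_{t_0}\lambda=2[\Delta[\tilde m],\tilde m]$, and conclude $\lambda\equiv 0$ from $\lambda|_{t_0=0}=0$. The paper simply asserts $[\Delta[\tilde m],\tilde m]=0$ at this step---tacitly an order-by-order statement, since the cocycle property of $\Delta[\tilde m]$ presupposes $\tilde m\circ\tilde m=0$---whereas your off-shell computation (using the identity $(\tilde M\circ\tilde M)(A,B)=P\{A,B\}$ from Sec.~2 and $\tilde M_1(\tilde m_{(i)})=\partial_{t_i}P$) shows that $\partial_{t_0}P$ is homogeneous linear in $P$, which is precisely what the paper's one-line argument needs to be rigorous.
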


\begin{proof}
It is clear that Eq.(\ref{tm}) has a unique formal solution that starts as $$\tilde{m}(t_0)= m+t_0\Delta[m]+o(t^2_0)\,.$$ 
Differentiating now the cochain $\lambda(t_0)=[\tilde{m},\tilde{m}]$ by $t_0$, we get 
$$
\frac{\partial \lambda}{\partial t_0}=2[\tilde{m}_{(0)},\tilde{m}]=[\Delta[\tilde{m}],\tilde{m}]=0\,.
$$
With account of the initial condition $\lambda(0)=[m,m]=0$ this means that $\lambda(t_0)=0$; and hence, $\tilde{m}$ defines an $(n+1)$-parameter family of $A_\infty$-structures.  

\end{proof}

We call the deformations of Proposition \ref{def} the {\it inner deformations} of families of $A_\infty$-algebras.

\begin{remark}\label{VF}
Geometrically, we can think of the r.h.s. of (\ref{tm}) as a vector field $\Delta$ on the infinite-dimensional space $\mathcal{W}$. Then the $A_\infty$-structures form a submanifold $\mathcal{M}\subset \mathcal{W}$ defined by the quadratic equation $[m,m]=0$.  The cocycle condition  $[m,\Delta]=0$ means that the vector field $\Delta$ is tangent to $\mathcal{M}$ and generates a flow  $\Phi^\Delta_{t_0}$ on $\mathcal{W}$, which leaves $\mathcal{M}$ invariant.  Therefore, $\tilde{m}=\Phi^\Delta_{t_0}(m)\subset \mathcal{M}$. Proceeding with this geometrical interpretation, we can consider the commutator $[\![\Delta,\Delta']\!]$ of two vector fields $\Delta$ and $\Delta'$ associated with some elements of $\mathcal{D}_m$. The vector field $[\![\Delta,\Delta']\!]$, being  tangent to $\mathcal{M}$, defines an $M_1$-cocycle.  It would be interesting to study the Lie algebra of vector fields generated by the elements of $\mathcal{D}_m$ in more detail. 

If we now allow some of the parameters $t_i$ to have odd degrees, then an odd vector field $\Delta$ may not be integrable in the sense that $[\![\Delta, \Delta]\!]\neq 0$. In this case Eq. (\ref{tm}) for the flow should be modified as 
$$
\tilde{m}_{(0)}=\Delta[\tilde{m}]-\frac12 t_0[\![\Delta,\Delta]\!][\tilde{m}]\,.
$$
Since $(t_0)^2=0$, the solution is given by $\tilde{m}=m+t_0\Delta[m]$ and it is obvious that $\tilde{m}\circ\tilde{m}=0$.
\end{remark}

\subsection{ Deformation of MC elements} Let $(V,m)$ be a flat $A_\infty$-algebra.  Then, whenever it is defined, the MC equation reads 
\begin{equation}\label{MCEq}
    m(a):=\sum_{n=1}^\infty m_n(a,\ldots, a)=0
\end{equation}
for $|a|=0$. A solution $a\in V^0$ to this equation is called an {MC element} of the $A_\infty$-algebra $(V,m)$  and the  set of all MC elements, called the { MC space}, is denoted by $\mathcal{MC}(V,m)$. 

In order to ensure the convergence of the series (\ref{MCEq}) one or another assumption about $(V,m)$ is needed. For example, one may assume that $m_n=0$ for all $n>p$, so that the series (\ref{MCEq}) is actually finite. This is the case of DGA's. Another possibility is to consider the scalar extension $V\otimes \mathfrak{m}_A$, where $\mathfrak{m}_A$ is the maximal ideal of an Artinian algebra $A$; the multi-linear operations on $V$ extend to those on $V\otimes \mathfrak{m}_A$ by $A$-linearity.  Neither of these approaches, however, is appropriate to our purposes. What suits us is, in a sense,  a combination of both.

\begin{definition}\label{d4}
We say that an $n$-parameter family of $A_\infty$-structures $m\in W[[t_1,\ldots,t_n]]$ is {\it locally finite} if for each 
$k$ there exists a finite $N$ such that
$$
m_{(i_1\cdots i_k)}|_{t=0}\in \bigoplus_{n=0}^N\mathrm{Hom}(T^n(V),V)\,.
$$
\end{definition}
With the supposition of  local finiteness, the MC equation (\ref{MCEq}) for an element $a\in V[[t_1,\ldots,t_n]]$ gives an infinite collection  of  well-defined  equations on the Taylor coefficients of $a$. Furthermore, we have the following statement, whose proof is left to the reader.
\begin{proposition}
Any inner deformation of a locally finite family of $A_\infty$-algebra is locally finite. 
\end{proposition}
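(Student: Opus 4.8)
The plan is to work entirely at the cochain level and to reduce the claim to a stability property of the arity filtration on $W$ under the three operations entering the inner-deformation equation \eqref{tm}. Write the deformed family as a formal power series in the new parameter, $\tilde{m}=\sum_{r\geq 0}\tilde{m}^{[r]}t_0^{\,r}$ with $\tilde{m}^{[r]}\in W[[t_1,\ldots,t_n]]$. The first observation is that local finiteness of $\tilde m$ as an $(n+1)$-parameter family is equivalent to local finiteness of each coefficient $\tilde m^{[r]}$ as an $n$-parameter family: an order-$k$ mixed derivative of $\tilde m$ at $t=0$ involving $r$ derivatives in $t_0$ equals, up to a factorial, an order-$(k-r)$ coefficient of $\tilde m^{[r]}$, so the arity bound for order $k$ is the maximum of the finitely many bounds $N_r(k-r)$, $0\leq r\leq k$, and is therefore finite. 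Thus it suffices to prove that every $\tilde m^{[r]}$ is locally finite.

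The second step is an arity bookkeeping lemma. I would record that the brace $m_p\{A,B\}$ of components of arities $p,\alpha,\beta$ has arity exactly $p+\alpha+\beta-2$, and more generally that an $l$-fold iterated cup product raises arity additively. Combining this with the fact that at any fixed total order in the deformation parameters there are only finitely many ways to distribute that order among the factors, I would deduce that the three operations appearing in $\Delta[\,\cdot\,]$ --- partial differentiation $\partial_{t_i}$, multiplication by a coefficient $c\in k[[t_1,\ldots,t_n]]$, and the cup product $A\cup B=(-1)^{|A|-1}m\{A,B\}$ formed with respect to a \emph{locally finite} ambient structure --- all send locally finite power series to locally finite power series, with explicitly transformed bound functions.

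The third step is the induction on $r$. The base case $\tilde m^{[0]}=m$ is locally finite by hypothesis. For the inductive step I would extract the coefficient of $t_0^{\,r}$ from \eqref{tm} to obtain the recursion $(r+1)\tilde m^{[r+1]}=(\Delta[\tilde m])^{[r]}$. Expanding the cup polynomial $\Delta[\tilde m]$ in powers of $t_0$, the coefficient $(\Delta[\tilde m])^{[r]}$ is a finite sum of terms built from the $c$'s, from $t_1,\ldots,t_n$-derivatives of factors $\tilde m^{[s]}$, and from braces whose ambient structure is some $\tilde m^{[s_0]}$; in every such term the orders $s,s_0$ that occur satisfy $s,s_0\leq r$, since the $t_0$-orders of the factors sum to $r$. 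By the induction hypothesis all these $\tilde m^{[s]}$ with $s\leq r$ are locally finite, so by the stability established in step two the right-hand side is locally finite; dividing by $r+1$ (legitimate in characteristic zero) shows $\tilde m^{[r+1]}$ is locally finite, closing the induction.

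The main obstacle, and the part demanding genuine care, lies in the uniformity asserted in the second step: one must verify that the arity bound for the output of a brace or cup product depends only on the \emph{total} order in the deformation parameters and not on the individual multi-index. This hinges on the additive arity bookkeeping for braces together with the finiteness of the set of order-distributions among the factors at each fixed total order, the latter being exactly where the circularity between ``$\tilde m$ is what we solve for'' and ``the cup product is taken with respect to $\tilde m$'' is broken by the inductive control $s,s_0\leq r$. Once this uniformity is in place, the remaining verifications are routine.
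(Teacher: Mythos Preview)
The paper does not actually supply a proof of this proposition; the authors state it and explicitly leave the verification to the reader. Your argument is correct and is essentially the natural one the authors presumably had in mind: expand $\tilde m$ as a power series in the new parameter $t_0$, reduce local finiteness of the $(n{+}1)$-parameter family to local finiteness of each coefficient $\tilde m^{[r]}$ as an $n$-parameter family, and then induct on $r$ via the recursion $(r+1)\tilde m^{[r+1]}=(\Delta[\tilde m])^{[r]}$ together with the additive arity bookkeeping for braces.

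The one genuinely delicate point, which you correctly identify and resolve, is that the cup products in $\Delta[\tilde m]$ are taken with respect to $\tilde m$ itself rather than the undeformed $m$. The circularity is broken exactly as you say: the $t_0^r$-coefficient of $\Delta[\tilde m]$ involves only the coefficients $\tilde m^{[s]}$ with $s\le r$ (both as ambient brace-holders and as factors $\tilde m_{(i)}$), all of which are controlled by the inductive hypothesis. Since $\Delta$ is a cup polynomial of bounded degree, each monomial contains at most $2L-1$ occurrences of $\tilde m$, so at every fixed total order only finitely many order-distributions among the factors arise and the resulting arity bound is a finite maximum. Nothing is missing; the proposition is indeed routine once the recursion is written down, which is why the paper omits it.
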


The next proposition shows that inner deformations of locally finite families  are always accompanied by deformations of their MC spaces.  

\begin{proposition}\label{p6}
Let $\tilde{m}$ be an inner deformation of a locally finite family of $A_\infty$-structures $m$. Then each MC element for $m$ can be deformed to that for $\tilde{m}$, establishing thus a monomorphism $\mathcal{MC}(V,m)\rightarrow \mathcal{MC}(V,\tilde{m})$. 
\end{proposition}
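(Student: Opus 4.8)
The plan is to realise the asserted monomorphism as a section of the restriction map. Since $\tilde m|_{t_0=0}=m$, setting $t_0=0$ carries $\mathcal{MC}(V,\tilde m)$ into $\mathcal{MC}(V,m)$, so I would instead show that every $a_0\in\mathcal{MC}(V,m)$ lifts to an MC element $a\in\mathcal{MC}(V,\tilde m)$ with $a|_{t_0=0}=a_0$; then, as $a|_{t_0=0}=a_0$, the assignment $a_0\mapsto a$ is a section of the restriction and in particular injective. By the preceding proposition $\tilde m$ is again locally finite, so the series $\tilde m(a)=\sum_n\tilde m_n(a,\dots,a)$ and all its formal derivatives are well defined, and the lift may be sought as a formal power series $a=a_0+t_0a_1+t_0^2a_2+\cdots$ in the new parameter $t_0$.

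The construction I have in mind mimics the flow picture of Remark \ref{VF}. Writing $\mathrm{ev}_a(A)=\sum_nA_n(a,\dots,a)$ for the evaluation of a cochain on the group-like element and $d_a(b)=\sum_n\sum_i \tilde m_n(a,\dots,b,\dots,a)$ for the associated twisted differential, a direct differentiation using the defining equation $\tilde m_{(0)}=\Delta[\tilde m]$ gives
$$
\frac{\partial}{\partial t_0}\,\tilde m(a)=\mathrm{ev}_a\big(\Delta[\tilde m]\big)+d_a\!\left(\frac{\partial a}{\partial t_0}\right).
$$
Hence, to propagate the condition $\tilde m(a)=0$ in $t_0$ it suffices to solve $d_a(\partial a/\partial t_0)=-\mathrm{ev}_a(\Delta[\tilde m])$ order by order, the operator $d_{a_0}$ governing the top-order unknown at each step. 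Everything thus reduces to showing that the right-hand side is a $d_a$-coboundary.

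This exactness is the crux, and it is where the special form of $\Delta$ enters. The first step is the (sign-aside) identities $\mathrm{ev}_a(M_1A)=d_a(\mathrm{ev}_aA)$ and $\mathrm{ev}_a(A\cup B)=\pm M_2^a(\mathrm{ev}_aA,\mathrm{ev}_aB)$, valid whenever $\tilde m(a)=0$; they follow from the brace definitions exactly as in the computation of $M\circ M$, the terms that would obstruct the chain-map property being proportional to the MC value $\tilde m(a)$ and hence absent. Here $M_2^a$ is the arity-two product of the $a$-twisted (flat) $A_\infty$-algebra, which descends to $H^\bullet(V,d_a)$ and therefore annihilates $d_a$-coboundaries. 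The second, decisive step is that for a family MC element one has, on differentiating $\tilde m(a)=0$ in $t_i$,
$$
\mathrm{ev}_a\big(\tilde m_{(i)}\big)=-\,d_a\!\left(\frac{\partial a}{\partial t_i}\right),
$$
so each generator $\tilde m_{(i)}$ of $\mathcal{D}_{\tilde m}$ evaluates to a coboundary. Since $\Delta$ is a cup polynomial in the $\tilde m_{(i)}$ in which every monomial carries at least one factor (the case relevant to genuine deformations), and since the induced product on $H^\bullet(V,d_a)$ kills any coboundary, $\mathrm{ev}_a(\Delta[\tilde m])$ represents the zero class and is $d_a$-exact, supplying the lift $\partial a/\partial t_0$ we need.

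With this exactness in hand the recursion closes: at order $t_0^{k+1}$ the obstruction lies in the image of $d_{a_0}$ precisely because the MC equation holds through order $t_0^{k}$, so a choice of $a_{k+1}$ exists and the lift $a$ is built to all orders. I expect the main difficulty to lie in organising exactly this last point, namely guaranteeing the coboundary property at each order from the lower-order validity of the MC equation and thereby controlling the interplay between the $t_0$-recursion and the $t_i$-differentiations that produce the coboundaries, rather than in the formal solvability itself. Injectivity of $a_0\mapsto a$ is then immediate from $a|_{t_0=0}=a_0$.
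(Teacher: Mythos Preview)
Your proposal is correct and follows the same core idea as the paper: both exploit that evaluation at an MC element carries $(W,M_1,M_2)$ to the twisted structure $(V,d_a,m_2^a)$, and that each generator $\tilde m_{(i)}$ evaluates to the $d_a$-coboundary $-d_a(\partial a/\partial t_i)$, so the cup monomial $\Delta[\tilde m]$ evaluates to something $d_a$-exact. The difference is one of explicitness. You run an order-by-order obstruction argument, invoking exactness abstractly and flagging the bookkeeping between the $t_0$-recursion and the $t_i$-differentiations as the main difficulty. The paper instead names the primitive at the cochain level: it replaces the first factor $\tilde m_{(i_1)}$ in the cup monomial by the operator $D_{i_1}$ (viewed as an element of $W$), uses \eqref{M12} to pull $M_1$ outside, and thereby obtains the closed-form flow
\[
D_0 a=-(-1)^{l}\,\Delta\bigl(D_{i_1},\tilde m_{(i_2)},\ldots,\tilde m_{(i_l)}\bigr)(a).
\]
With $a$ \emph{defined} by this ODE, the paper observes that all the identities you used ``modulo $\tilde m(a)=0$'' have error terms that are linear in $\tilde m(a)$, so $D_0(\tilde m(a))$ satisfies a linear homogeneous ODE in $t_0$ with zero initial value and hence vanishes identically. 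This buys a canonical lift and dissolves exactly the recursion--differentiation interplay you anticipated as delicate; your route, by contrast, is more conceptual and makes the role of the twisted cohomology $H^\bullet(V,d_a)$ transparent.
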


\begin{proof}
In order to simplify the formulas below we restrict ourselves to inner deformations that are generated by monomials 
$$
\Delta[\tilde{m}]=\tilde{m}_{(i_1)}\cup \tilde{m}_{(i_2)}\cup \cdots \cup \tilde{ m}_{(i_l)}\,.
$$
The generalization  to arbitrary cup polynomials (\ref{pol}) will be obvious. 

Suppose that $a\in \mathcal{MC}(V,\tilde{m})$, then 
\begin{equation}\label{tma}
\tilde{m}(a)=0\,.
\end{equation}
Differentiating this identity by $t_i$, we get
$$
\tilde{m}_{(i)}(a)+\tilde{m}\{D_i\}(a)=0\,.
$$
Here  by $D_i$ we denoted the operator of partial derivative, $D_ia=\partial a/\partial t_i$. 
Therefore, when evaluated on MC elements, the deformation equation (\ref{tm}) can be written as 
\begin{equation}\label{mDs}
\tilde{m}\{D_0\}(a)=-\Delta[\tilde{m}](a)\,.
\end{equation}
More explicitly, the r.h.s. of this equation is defined by  
$$
\begin{array}{c}
\Delta[\tilde{m}](a)=\Delta(\tilde{m}_{(i_1)},\tilde{m}_{(i_2)},\ldots, \tilde{m}_{(i_l)} )(a)\\[3mm]
=M_2(\ldots( M_2(M_2(M_2(\tilde{m}_{(i_1)},\tilde{m}_{(i_2)}),\tilde{m}_{(i_3)}),\ldots, \tilde{m}_{(i_l)})(a)\,.
\end{array}
$$
Here we used the definition of the cup product (\ref{cup}). 

Again, when evaluated on MC elements,  the expression $\tilde{m}\{D_i\}(a)$ can be replaced with $M_1(D_i)(a)$. This allows us to write 
$$
\Delta[\tilde{m}](a)=-\Delta(M_1(D_{i_1}),\tilde{m}_{(i_2)},\ldots, \tilde{m}_{(i_l)} )(a)\,.
$$
Since $M_1(\tilde{m}_{(i)})=0$, the repeated use of Rel. (\ref{M12}) allows us to rewrite the last expression as  
$$
\Delta[\tilde{m}](a)=(-1)^{l} M_1(\Delta(D_{i_1},\tilde{m}_{(i_2)},\ldots, \tilde{m}_{(i_l)} ))(a)
$$
or, equivalently, 
$$
\Delta[\tilde{m}](a)=(-1)^{l} m\{\Delta(D_{i_1},\tilde{m}_{(i_2)},\ldots, \tilde{m}_{(i_l)} )\}(a)\,.
$$
Thus, Eq. (\ref{mDs}) takes the form 
$$
m\{D_0\}(a)=-(-1)^l m\{\Delta(D_{i_1},\tilde{m}_{(i_2)},\ldots, \tilde{m}_{(i_l)} )\}(a)\,.
$$
We will definitely  satisfy this equation if require that 
\begin{equation}\label{Da}
D_0a=-(-1)^l \Delta(D_{i_1},\tilde{m}_{(i_2)},\ldots, \tilde{m}_{(i_l)})(a)\,.
\end{equation}
This gives a differential equation for $a\in V[[t_0,t_1,\ldots,t_n]]$ w.r.t. the formal `evolution parameter' $t_0$ of degree $1-|\Delta|$.

Now we can evaluate $\tilde{m}$ on a formal solution to Eq. (\ref{Da}). It follows from the course of the proof above  that the partial derivative $D_0(\tilde{m}(a))$ depends on $\tilde{m}(a)$ linearly thereby vanishes on (\ref{tma}). This means that the vector $\tilde{m}(a)\in V[[t_0,t_1,\ldots,t_n]]$ is zero  whenever it vanishes at $t_0=0$. But the last condition is just the definition of an MC element $a|_{t_0=0}\in \mathcal{MC}(V,m)$.  
\end{proof}

\subsection{Minimal deformations of DGA's}\label{MinDef} We now apply the above machinery of inner deformations to the case of DGA's. Recall that a DGA $\mathcal{A}$ is given by a triple $(V,\partial,\cdot)$, where $V=\bigoplus V^l$ is a graded vector space endowed with an associative dot product and a differential $\partial: V^l\rightarrow V^{l-1}$. 

\begin{remark} Here we equip a DGA with a differential of degree $-1$. From the perspective of $A_\infty$-algebras, it is more natural to consider differentials of degree $1$. As was discussed in Sec. 2, a DGA structure on $V$ can then be interpreted as a `degenerate' $A_\infty$-structure on $V[1]$ involving only a linear map $m_1$  and a bilinear map $m_2$, both of degree $1$. Actually, there is not much difference between the two definitions as one can always relate them by the degree reversion functor $\iota$. By definition, $\iota V$ is a graded vector space with $(\iota V)^l=V^{-l}$.   Clearly, the $k$-linear map $V\rightarrow \iota V$ respects the product while reverting the degree of the differential. 
\end{remark}

Let $\mathcal{A}_t$ be a one-parameter deformation of $\mathcal{A}$, with $t$ being a formal parameter of degree zero.  In order to make the DGA $\mathcal{A}_t$ into a family of $A_\infty$-algebras, we define the tensor product algebra $\mathcal{A}_t\otimes k[[u]]$, where $u$ is an auxiliary formal variable  of degree $2$. Here we consider $k[[u]]$ as a DGA with trivial differential.  Multiplying now the differential $\partial$ in $\mathcal{A}_t$ by $u$ yields the differential $d=u\partial$ in $\mathcal{A}_t\otimes k[[u]]$ of degree $1$. 
This allows us to treat the DGA $\mathcal{A}_t\otimes k[[u]]$  as a $2$-parameter family of $A_\infty$-algebras with $m_1=d$ and $m_2$ defined by (\ref{bbb}).  On the other hand, given  a two-parameter family of $A_\infty$-structures $m$ with the parameters $t$ and $u$ of degrees $0$ and $2$, respectively, we can define the sequence of cocycles 
\begin{equation}\label{Dn}
\Delta_n=m_{(t)}\cup \underbrace{ m_{(u)}\cup  m_{(u)}\cup \cdots \cup m_{(u)} }_n\,,\qquad n=0,1,2,\ldots\,.
\end{equation}
Here the subscripts $t$ and $u$ stand for the partial derivatives of $m$ w.r.t. $t$ and $u$. 
Keeping in mind that the cup product has degree $1$ while $|m_{(u)}|=-1$, we conclude that $|\Delta_n|=1$ for all $n$.  By Proposition \ref{def}, each cocycle $\Delta_n$ gives rise to a formal deformation of $m$ with a new deformation parameter $s$ of degree zero. The deformed $A_\infty$-structure $\tilde{m}$ is defined by the differential equation 
\begin{equation}\label{tilde-m}
\tilde{m}_{(s)}=\Delta_n[\tilde{m}]
\end{equation}
with the initial condition $\tilde{m}|_{s=0}=m$. The parameter $u$ plays an auxiliary role in our construction. Setting $u=0$, we finally get, for each $n$,  a family $\bar{m}=\tilde{m}|_{u=0}$ of $A_\infty$-structures parameterized by  $t$ and $s$; both the parameters are of degree zero. 
By construction, $\bar{m}$ starts with $m_2$ and the first-order deformation in $s$ is given by
$$
\bar{m}^{(1)}(a_1,a_2,\ldots,a_{n+2})=(a_1\cdot a_2)'\cdot\partial(a_3)\cdot \partial(a_4)\cdots \partial(a_{n+2})\,,
$$
where the prime denotes the partial derivative of the dot product in $\mathcal{A}_t$ by $t$. 
Evaluating $\bar{m}\circ \bar{m}=0$ at the first order in $s$, we get  
$$
\begin{array}{c}
[m_2, \bar{m}^{(1)} ](a_0,a_1,\ldots,a_{n+2})=-a_0 \cdot \bar{ m}^{(1)}(a_1,a_2,\ldots,a_{n+2})\\[5mm]
\displaystyle -\sum_{k=0}^{n-1}(-1)^{|a_0|+\cdots+|a_k|}\bar{m}^{(1)}(a_0,\ldots, a_{k-1},a_k\cdot a_{k+1},a_{k+2},\ldots, a_{n+2})\\[5mm]
+(-1)^{|a_0|+\cdots +|a_{n+1}|} \bar{m}^{(1)}(a_0,a_1,\ldots, a_{n+1})\cdot a_{n+2}=0\,.
\end{array}
$$
Therefore, $\bar{m}^{(1)}$ is a Hochschild cocycle of the algebra $\mathcal{A}_t$ representing an element of $HH^{n+2}(\mathcal{A}_t,\mathcal{A}_t)$. If the cocycle $m^{(1)}$ is nontrivial, then it defines a nontrivial deformation of the algebra $\mathcal{A}_t$ in the category of $A_\infty$-algebras.  Notice that  the resulting $A_\infty$-structure $\bar{m}$ is minimal as, by construction, $\bar{m}_1=0$.  For this reason we refer to $\bar{m}$ as a {\it minimal deformation} of the DGA structure $m$. In such a way we arrive at the first statement of Theorem \ref{Th1}. 

In the special case that  the differential $\partial$ does not depend on $t$, the r.h.s. of Eq. (\ref{tilde-m}) is independent of $u$, so that the whole dependence of $\tilde{m}$ of $u$ is concentrated in the first structure map $\tilde{m}_1=u\partial$. This means that all the structure maps constituting $\tilde{m}$ or $\bar{m}$ are differentiated by $\partial$.

Being determined only by the first and second structure maps, the $A_\infty$-algebra  $\mathcal{A}_t\otimes k[[u]]$ is evidently locally finite  in the sense of Definition \ref{d4} and so is its minimal deformation defined by $\bar{m}\in W[[t,s]]$.  At $s=0$, the $A_\infty$-structure $\bar{m}$ reduces to the product in $\mathcal{A}_t$ and the MC equation takes the form\footnote{Notice that the differential $\partial$ does not contribute to the MC equation, contrary to what one might expect. From the viewpoint of the DGA the element  $a$ has degree $1$, so that $\partial a$ is of degree $0$, and not $2$.} $a\cdot a=0$.
Applying Proposition \ref{p6} to a solution $a$ yields then an MC element $\bar{a}=a+\sum_{k>0}a_{k}s^k$ for the minimal $A_\infty$-algebra $(V, \bar{m})$, that is, $\bar{m}(\bar{a})=0$. This proves the rest part of Theorem \ref{Th1}.

As a final remark we note that the above construction of minimal deformations carries over verbatim to the case of smooth (i.e., not formal) families of DGA's $\mathcal{A}_t$. 
An interesting example of a smooth family of algebras is considered below.

\subsection{Example.} Let us illustrate the construction of the previous subsection by the example of the polynomial Weyl algebra $A_m[t]$. As a vector space $A_m[t]$ coincides with the space $k[t,x^1,\ldots, x^{2m}]$ of polynomials in $2m+1$ variables.  Multiplication in $A_m[t]$ is given by the $\ast$-product
\begin{equation}\label{WM}
a\ast b=a\cdot b +\sum_{k=1}^\infty {t^k} (a\stackrel{_k}{\ast} b)\,,
\end{equation}
where 
$$
a\stackrel{_k}{\ast} b= \frac1{k!}\omega^{i_1j_1}\cdots \omega^{i_kj_k}\frac{\partial^k a}{\partial x^{i_1}\cdots \partial x^{i_k}}\frac{\partial^k b}{\partial x^{j_1}\cdots \partial x^{j_k}}
$$
and $\omega^{ij}$ is a skew-symmetric, non-degenerate matrix with entries in $k$. The $\ast$-product is known to be associative but non-commutative. 
Clearly, one may regard $A_m[t]$ as a one-parameter deformation of the usual polynomial algebra $k[x^1,\ldots,x^{2m}]$ with the commutative dot product.

As was explained in the Introduction, we can turn the polynomial Weyl algebra  into a family of DGA's $\mathcal{A}_t$ simply treating $A_m[t]$ as a bimodule over itself. 
The family $\mathcal{A}_t$ is concentrated in degrees $0$ and $1$, so that the underlying $k$-vector space is $V=V^0\oplus V^1$ with  $V^0=A_m[t]=V^1$.  Multiplication is defined by the rule (\ref{Ext}) and the differential $\partial$ is completely specified by declaring $\partial : V^1 \rightarrow V^0$ to be the identity homomorphism of $A_m[t]$ onto itself. 

Following prescriptions of Sec. \ref{MinDef}, we can now produce a two-parameter family of $A_\infty$-structures $\bar{m}$ generated, for example, by the cocycle $\Delta_1$ of (\ref{Dn}).  The family is parameterized by the initial parameter $t$ and a 
new deformation parameter $s$ of degree zero. The explicit expression for $\bar{m}$ resulting from the deformation equation (\ref{tilde-m}) appears to be rather complicated.  The situation is slightly simplified if we put $t=0$. This corresponds to a deformation of the polynomial algebra $k[x^1,\ldots,x^{2m}]$, considered as a bimodule over itself, in the category of minimal $A_\infty$-algebras. 

A direct, albeit tedious, calculation shows that the only nonzero maps $m_n\in \mathrm{Hom}(T^n(V[1]),V[1])$ 
constituting the $A_\infty$-structure $m=\bar{m}|_{t=0}$ are given by 
\begin{equation}\label{mmm}
\begin{array}{rl}
    m_n(a,b,u_3,\ldots,u_{n})&=s^{n-2}f_n(a,b,u_3,\ldots,u_{n-1})\cdot u_{n}\,,\\[3mm]
    m_n(a,u_2,\ldots,u_{n})&=s^{n-2}f_n(a,u_2,\ldots,u_{n-1})\cdot u_{n}\,,\\[3mm]
    m_n(u_2,b,u_3,\ldots,u_{n})&=-s^{n-2}f_n(u_2,b,\ldots,u_{n-1})\cdot u_{n}\,,
\end{array}
\end{equation}
where $a,b\in V^0$, $u_2,\ldots, u_n\in V^1$,  and  the $f_n$'s are defined by  
$$
f_{n+1}(a_1,a_2,\ldots, a_n)
$$
$$
=\sum a_1\stackrel{_{k_1}}{\ast} \underbrace{a_2\cdot a_3\cdots a_{l_1+1}}_{l_1}\stackrel{_{k_2}}{\ast}\underbrace{a_{l_1+2}\cdots a_{l_1+l_2+2}}_{l_2}\stackrel{_{k_3}}{\ast}\cdots \stackrel{_{k_p}}{\ast}\underbrace{a_{n-l_p+1}\cdots a_{n-1}\cdot a_{n}}_{l_p}
$$
for all $a_i\in k[x^1,\ldots,x^{2m}]$. Here, to save space, we omit parentheses specifying the order of multiplication; it is understood that all the multiplication operations are performed from left to right and 
summation runs over  all $k$'s and $l$'s obeying the (in)equalities 
$$
    \sum_{j=1}^{p} l_j=\sum_{j=1}^p k_j=n-2\,,\qquad  l_j\geq 1\,,\qquad k_j\geq 1\,, \qquad p\geq 0\,,
$$
$$
    l_p\geq k_p\,,\qquad 
    l_{p-1}+l_p\geq k_{p-1}+k_p\,,\quad 
    \ldots\,,\quad
    l_2+\cdots +l_p\geq k_2+\cdots +k_p\,.
$$

In particular, for $n=2$ (which means $p=0$) we recover the original bimodule structure for the polynomial algebra $k[x^1,\ldots,x^{2m}]$: 
$$
m_2(a,b)=a\cdot b \,,\qquad m_2(a,u)=a\cdot u\,,\qquad m_2(u,b)=-u\cdot b\,,
$$
and the first-order deformation is given by 
$$
\begin{array}{l}
m_3(a,b,u)=s(a\stackrel{_1}{\ast}b)\cdot u\,,\\[3mm] m_3(a,u_1,u_2)=s(a\stackrel{_1}{\ast}u_1)\cdot u_2\,,\\[3mm] 
m_3(u_1,b,u_2)=-s(u_1\stackrel{_1}{\ast}b)\cdot u_2\,.
\end{array}
$$
It is not hard to see that this deformation is nontrivial. Finally, for all $n$
$$
m_{n+2}(a,b,1,\ldots,1)=s^na\stackrel{_n}{\ast} b\,,
$$
which allows us to regard (\ref{mmm}) as a certain $A_\infty$ generalization of the Weyl--Moyal $\ast$-product (\ref{WM}) to the case of a `non-constant deformation parameter' $u$.


\begin{thebibliography}{10}



\bibitem{BBKL}R.~Blumenhagen, I.~Brunner,  V.~Kupriyanov and D. L\"ust, {\em Bootstrapping non-commutative gauge theories from $L_\infty$ algebras}, JHEP 05(2018)97. 


\bibitem{Erler:2013xta} 
  T.~Erler, S.~Konopka and I.~Sachs, {\em Resolving Witten's superstring field theory},
  JHEP {1404}, 150 (2014).
  
\bibitem{FP}    A.~Fialowski and M.~Penkava,   {\em Deformation theory of infinity algebras}, J. Algebra 255 (2002) 59-88.

\bibitem{Gerst} M.~Gerstenhaber,
{\em The cohomology structure of an associative
ring}, Ann. Math. {78} (1963) 59-73.


\bibitem{GSch} M.~Gerstenhaber and S.~D.~Schack, {\em Algebras, bialgebras, quantum groups, and algebraic deformations},  Contemporary Math., 134 (1992) 51-92.


\bibitem{GV}
{M.~Gerstenhaber and A.~Voronov}, {\em Higher operations on Hochschild complex},
  { Funct. Anal. Appl.} {29} (1995) 1-6.

 
\bibitem{Getz}
E.~Getzler, {\em Cartan homotopy formulas and the Gauss-Manin connection in
  cyclic homology}, { in Quantum Deformations of Algebras and Their
  Representations, Israel math. conf. proc. 7}, 1993, pp.~65-78.


\bibitem{GJ} E.~Getzler and J.~D.~S.~Jones, {\em Operads, homotopy algebra and iterated integrals for double loop spaces}, { \ttfamily hep-th/9403055}. 

\bibitem{VG} M.~Gerstenhaber and A.~ A.~Voronov, {\em Homotopy $G$-algebras and moduli space operad},	Internat. Math. Research Notices (1995) 141-153.



\bibitem{Kad}
T.~Kadeishvili, {\em The structure of the A($\infty$)-algebra, and the Hochschild
  and Harrison cohomologies.}, { Trudy Tbiliss. Mat. Inst. Razmadze Akad. Nauk Gruzin.
  SSR} 91 (1988) 19-27.

\bibitem{Tor} T.~Kadeishvili, {\em  Twisting Elements in Homotopy G-algebras}, Higher Structures in Geometry and Physics. In Honor of Murray Gerstenhaber and Jim Stasheff. Progress in Mathematics, Birkhauser, Vol. 287  (2011) 181-200. 




\bibitem{KS} H.~Kajiura and J.~Stasheff, {\em Homotopy algebras inspired by classical open-closed string field theory}, Commun. Math. Phys. 263 (2006) 553-581.


\bibitem{Kontsevich:2006jb} 
  M.~Kontsevich and Y.~Soibelman,
  {\em Notes on $A_\infty$-Algebras, $A_\infty$-Categories and Non-Commutative Geometry},
  Lect. Notes Phys.  { 757} (2009) 153-220.


\bibitem{LZ} S.~Li and K. Zeng, {\em Homotopy Algebras in Higher Spin Theory}, {\ttfamily	arXiv:1807.06037 [hep-th]}.


\bibitem{LSh} S.~L.~Lyakhovich and A.~A.~Sharapov, {\em BRST theory without Hamiltonian and Lagrangian}, JHEP 03(2005)011.


\bibitem{PSh} M.~Penkava and A.~Schwarz, { \em $A_\infty$ algebras and the cohomology of moduli spaces},
Dynkin Seminar, vol. 169, American Mathematical Society, 1995, pp. 91-107.




\bibitem{S} S.~Sagave, {\em DG-algebras and derived $A_\infty$-algebras}, J. Reine Angew. Math. 639 (2010) 73-105.

\bibitem{ShSk1}
A.~A.~Sharapov and E.~D.~Skvortsov, {\em Formal Higher-Spin Theories and
  Kontsevich--Shoikhet--Tsygan Formality}, Nucl. Phys. B921 (2017) 538-584. 



\bibitem{ShSk3} A.~A.~Sharapov and E.~D.~Skvortsov, {\em Formal Higher Spin Gravities}, {\ttfamily arXiv:1901.01426 [hep-th]}.


\bibitem{ShSk2}
A.~A. Sharapov and E.~D. Skvortsov, {\em On deformations of
  $A_\infty$-algebras},
\href{http://arxiv.org/abs/1809.03386}{{\ttfamily arXiv:1809.03386 [math-ph]}}.


\bibitem{St}  J.~D.~Stasheff, {\em On the homotopy associativity of H-spaces, I, II}, Trans. Amer. Math. Soc. 108 (1963) 275-293.


\bibitem{JSt} J.~Stasheff, {\em $L_\infty$ and $A_\infty$ structures: then and now}, {\ttfamily arXiv:1809.02526 [math.QA]}.



\bibitem{Tsygan} B.~Tsygan, {\em Cyclic Homology}. In: Cyclic Homology in Non-Commutative Geometry. Encyclopaedia of Mathematical Sciences (Operator Algebras and Non-Commutative Geometry II), vol 121. Springer, Berlin, Heidelberg, 2004, 73-113.










\end{thebibliography}
\end{document}